\documentclass[12pt]{amsart}
\usepackage{amsmath,amssymb,amsthm,amsfonts}
\newtheorem{theorem}{Theorem}[section]
\theoremstyle{definition}

\newtheorem{proposition}[theorem]{Proposition}

\newtheorem{conjecture}[theorem]{Conjecture}

\theoremstyle{remark}

\numberwithin{equation}{section}
\begin{document}
\title[A note on the Navarro conjecture for alternating groups]{A note on the Navarro conjecture for alternating groups with abelian defect}
\author{Rishi Nath}
\address{York College/City University of New York}
\email{rnath@york.cuny.edu} \subjclass[2000]{Primary 20C30}
%\thanks{A preliminary version of these results appear in the PhD thesis of the author \cite{Na2}.}
%\keywords{Navarro conjecture, McKay conjecture, alternating groups, Galois automorphisms}

%\date{April 11, 2008}
\begin{abstract}
G.Navarro proposed (in \cite{Nav}) a refinement of the unsolved McKay conjecture
involving certain Galois automorphisms. The author verified this new conjecture for the alternating groups $A(\Pi)$ when $p=2$ (see \cite{Na2}).  For odd primes $p$ the conjecture is more difficult to study due the complexities in the $p$-local character theory. We consider the principal blocks of $A(\Pi)$ with an abelian defect group when $p$ is odd: in this case the Navarro conjecture holds for $p$-singular characters.\\
%{\center \bf keywords:} Navarro conjecture, McKay conjecture, alternating groups, Galois autmorphisms
\end{abstract}
\maketitle
\section{McKay and Navarro conjectures}
\noindent
\\
Let $G$ be a finite group, $|G|=n$, $p$ be a prime
dividing $n$, $D$ a Sylow $p$-group of $G$, and $N_{G}(D)$
the normalizer of $D$ in $G$. Let $Irr (G)$ denote the
irreducible characters of $G$, and $Irr_{p'}(G)$ the subset of
characters whose degree is relatively prime to $p$. The
following is a well-known conjecture.
\begin{conjecture}(McKay, \cite{Alp})\label{McKayCon}
\[
|Irr_{p'}(G)|=|Irr_{p'}(N_{G}(D))|.
\]
\end{conjecture}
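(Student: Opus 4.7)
The approach I would take is the one initiated by Isaacs--Malle--Navarro: reduce Conjecture~\ref{McKayCon} to a precise local condition on the nonabelian finite simple groups, and then verify that condition in every family appearing in the classification of finite simple groups. No uniform proof for arbitrary finite $G$ is known, so the strategy is necessarily two-step.

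The first step is to isolate an \emph{inductive McKay condition}: given a quasi-simple group $S$ with Sylow $p$-subgroup $Q$, one asks for an $\text{Aut}(S)$-equivariant bijection $Irr_{p'}(S) \to Irr_{p'}(N_S(Q))$ that is compatible with central characters and with the projective representation theory of the inertia subgroups in $\text{Aut}(S)$. Granted such a condition on each simple composition factor, the reduction theorem itself proceeds by induction on $|G|$: for a minimal normal subgroup $M \trianglelefteq G$ one handles the abelian case via standard Clifford theory, and the nonabelian case --- where $M$ is a direct product of isomorphic simple groups --- by gluing the local bijections on the factors consistently with the $G$-action on $M$, using the projective compatibility of the inductive condition to control the extension problems that arise.

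The bulk of the work is then the family-by-family verification. For alternating groups at odd $p$ one exploits the hook and bar combinatorics of partitions together with the explicit wreath-product structure of the Sylow normalizer, extending ideas from \cite{Na2}; for sporadic groups the condition is checked directly on the known character tables; for groups of Lie type in defining characteristic one uses $BN$-pair theory and Borel--Tits. The genuinely hard case is groups of Lie type in \emph{non}-defining characteristic, where Deligne--Lusztig theory, $d$-Harish-Chandra series, and Sp\"ath's ``strong'' reformulation of the inductive condition are all required, alongside a delicate analysis of how field, graph, and diagonal automorphisms act on cuspidal series and lift to the universal covering group.

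The principal obstacle, and the reason Conjecture~\ref{McKayCon} has remained open for decades, lies in exactly this Lie-type analysis: controlling the action of outer automorphisms on Deligne--Lusztig characters and then lifting that control through the cohomological $H^2$-obstructions on the universal cover. In the narrower regime relevant to the rest of this paper --- principal blocks of $A(\Pi)$ with abelian defect at odd $p$ --- the Sylow normalizer admits an explicit wreath-product description, the hook/bar combinatorics of self-conjugate partitions is tractable, and the full weight of that global program can be avoided.
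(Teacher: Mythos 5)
This statement is labeled a \emph{conjecture} in the paper, and the paper offers no proof of it: the McKay conjecture is quoted as background, and the paper's actual contribution is the much narrower verification of the Navarro refinement for $p$-singular characters of principal blocks of $A(\Pi)$ with abelian defect. So there is no ``paper's own proof'' to compare yours against, and you should not be offering a proof of Conjecture~\ref{McKayCon} at all in this context.

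More importantly, what you have written is not a proof but a description of a research program, and you say so yourself: you state that ``no uniform proof for arbitrary finite $G$ is known'' and that the Lie-type analysis in non-defining characteristic is ``the principal obstacle'' and the reason the conjecture ``has remained open.'' A correct reduction theorem of Isaacs--Malle--Navarro type plus a complete family-by-family verification would indeed constitute a proof, but you supply neither: the reduction is sketched only at the level of ``one handles the abelian case via standard Clifford theory'' and ``gluing the local bijections,'' with none of the extension-theoretic and cohomological details that make the reduction delicate, and the verification step is explicitly left incomplete for the hardest families. The concrete gap is therefore the entire content of the argument --- both a rigorous statement and proof of the inductive condition and its reduction theorem, and the case-by-case check over the classification. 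Within this paper the honest and correct move is the one the author makes: state Conjecture~\ref{McKayCon} as a conjecture, and prove only the restricted statement about $Irr_{p',sing}(B_1)$ and $Irr_{p',sing}(b_1)$, where the explicit wreath-product structure of $N_{G}(D)$ and the Fong--Harris character values make a direct bijective argument possible.
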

Recently G. Navarro
strengthened the McKay conjecture
in the following way. All irreducible
complex characters of $G$ are afforded by a representation with values in the $n$th
cyclotomic field ${\mathbb Q}_n/{\mathbb Q}$ (Lemma 2.15, \cite{I}).
Then the Galois group $\mathcal{G}=\text{Gal}({\mathbb
Q}_n/{\mathbb Q})$ permutes the elements of $Irr (G).$ We denote the action of $\sigma$ on $\chi\in Irr (G)$ by $\chi^{\sigma}.$ Then
$\chi\in Irr (G)$ is $\sigma$-{\it fixed} if its values are fixed by $\sigma$, that is, $\chi^{\sigma}=\chi.$
Let $e$ be a nonnegative integer and consider
$\sigma_{e}\in \mathcal{G}$ where $\sigma_e(\xi)=\xi^{p^e}$ for
all $p'$-roots of unity $\xi$. Define $\mathcal{N}$ to be the subset
of $\mathcal{G}$ consisting of all such $\sigma_e.$
Let
$Irr^{\sigma}_{p'} (G)$ and $Irr^{\sigma}_{p'} (N_{G}(D))$ be the
subsets of $Irr_{p'}(G)$ and $Irr_{p'}(N_{G}(D))$ respectively
fixed by $\sigma\in \mathcal{N}$.
\begin{conjecture}(Navarro, \cite{Nav}) \label{NavConj}
Let $\sigma\in\mathcal{N}.$ Then
\[
|Irr^{\sigma}_{p'} (G)|=|Irr^{\sigma}_{p'} (N_{G}(D))|.
\]
\end{conjecture}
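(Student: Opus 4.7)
The plan is to match the two sides of the Navarro equality combinatorially, using the partition parametrization of $Irr(S_n)$ and Clifford theory between $S_n$ and $A_n$. For $p$ odd with abelian defect one has $n<p^2$; writing $n=pw+r$ with $0\le r<p$ and $w<p$, the Sylow $p$-subgroup $D$ of $A_n$ coincides with that of $S_n$, is elementary abelian of rank $w$, and is realized by $w$ disjoint $p$-cycles, so that $N_{S_n}(D)\cong (C_p\rtimes C_{p-1})\wr S_w \times S_r$ and $N_{A_n}(D)=N_{S_n}(D)\cap A_n$. By Nakayama and the hook-length formula, $Irr_{p'}(B_0(S_n))$ is indexed by partitions $\lambda$ of $n$ with the prescribed $p$-core and with $p$-quotient of $p'$-dimension; Clifford theory refines this to a parametrization of $Irr_{p'}(B_0(A_n))$ in which characters labeled by non-self-conjugate $\lambda$ restrict irreducibly, while $\lambda=\lambda'$ produces a ``split'' pair $\phi^\lambda_\pm$. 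An analogous analysis of the wreath product yields the matching combinatorial index for $Irr_{p'}(B_0(N_{A_n}(D)))$, bijecting with the $A_n$ side via the Alperin--Fong correspondence.

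Next I would compute the action of $\sigma_e\in\mathcal{N}$ on both parametrizations. Since all characters of $S_n$ are rational-valued, $\sigma_e$ fixes every $\chi^\lambda$, so the only nontrivial Galois action on the $A_n$ side is the permutation of split pairs $\phi^\lambda_\pm$ with $\lambda=\lambda'$; by the classical Frobenius formula for their values on the class of diagonal-hook cycle type, $\phi^\lambda_\pm$ is $\sigma_e$-fixed precisely when $p^e$ is a quadratic residue modulo the $p'$-part of an explicit hook-length product $H(\lambda)$. A parallel computation on the wreath-product side gives a condition modulo some $H'(\tilde\lambda)$ governing the split pair in $N_{A_n}(D)$. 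The matching of $\sigma_e$-fixed points then reduces to proving that the bijection $\lambda\leftrightarrow\tilde\lambda$ sends self-conjugate labels to self-conjugate labels, and that on the $p$-singular subset identified in the statement the ratio $H(\lambda)/H'(\tilde\lambda)$ is a square in $\mathbb{F}_p^\times$.

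The main obstacle is this last step: the numerical matching of the two Legendre-symbol conditions. I expect it to follow from a direct $p$-adic valuation argument combining Nakayama's hook decomposition with the wreath-product structure of $N_{S_n}(D)$, so that the contributions of the core and of the $p$-quotient to $H(\lambda)$ factor cleanly against those to $H'(\tilde\lambda)$ and leave only squares. The restriction to $p$-singular characters should be precisely what forces this cancellation, since for such $\lambda$ the hook lengths contributing to $H(\lambda)$ carry a nontrivial $p$-power whose interaction with the wreath-product signs introduced by Clifford theory is predictable, allowing the two sides to be compared term-by-term.
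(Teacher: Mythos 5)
Your plan follows essentially the same route as the paper: Fong's height-zero bijection $f$ and its induced map $f^{+}$, the splitting criterion $\Lambda=\Lambda^{*}$ of Proposition~\ref{pwsplits}, the observation that rationality of $Irr(S(\Pi))$ reduces everything to the Galois action on the split pairs, and a comparison of the quadratic irrationalities given by Theorem~\ref{Frob} globally and by the Fong--Harris formula (Theorem~\ref{psingular-irrational}) locally. The step you defer as an expected Legendre-symbol matching is settled in the paper more directly than by a residue computation: for $p$-singular $\lambda$ the diagonal hooks satisfy $\delta_{jj}=p\,\eta_{jj}$ (Theorem~\ref{diagonal_D}) and the signs satisfy $\epsilon_{\lambda_{p^*}}\epsilon_p=\epsilon_{\lambda}$, so the two square-root irrationalities coincide exactly and $\sigma_e$ fixes or swaps both pairs simultaneously; note that it is precisely the lack of such explicit local values for general splitting $\Lambda$ that forces the restriction (in the paper and in your final paragraph) to the $p$-singular characters rather than all of $Irr_{p'}$.
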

The Navarro conjecture follows from the existence of a bijection $\phi$ from $Irr_{p'}(G)$ to $Irr_{p'}(N_{G}(D))$
that commutes with $\mathcal{N}$. That is, $\phi(\chi^{\sigma})=\phi(\chi)^{\sigma}$ for all $\sigma\in\mathcal{N}$
and $\chi\in Irr_{p'}(G)$. The author verified in \cite{Na1} that the Navarro conjecture holds for the alternating groups $A(\Pi)$
when $p=2$. The verification when $p$ is odd is more complicated since little is known
about values of $Irr_{p'}(N_{A(\Pi)}(D))$.
However in the special case that $A(\Pi)$ has an abelian defect group (equivalently $|\Pi|=n_0+wp$ with $w<p$)
this paper verifies that the Navarro conjecture holds for the $p$-singular characters of the principal block.  The proof relies on results of P. Fong and M. Harris (see $\S$4, \cite{F-H}) on the irrationalities of the $p$-singular characters of $N_{A(\Pi)}(D)$.
%%%%%%%%%%%%%%%%%%%%%%%
\section{A local-global bijection}
\subsection{$p'$-splitting characters of $G$}
\noindent
\\
Let $n\in\mathbb{N}.$ A {\it partition} $\lambda$ of $n$ is a non-increasing integer sequence
$(a_1,\cdots,a_m)$
satisfying $a_i \geq\cdots\geq a_m$ and $\sum_i a_i=n.$
Then the {\it Young diagram} of $\lambda$ is
$n$ nodes placed in rows such that the $i$th row of $\lambda$ consists
of $a_i$ nodes. The $(i,j)$-{\it node} of $\lambda$ lies in the $i$th row and $j$th column of the Young diagram. The $(i,j)$-{\it hook} $h^{\lambda}_{ij}$ of [$\lambda$] and consists of the $(i,j)$-node (or {\it corner} of $h^{\lambda}_{ij}$),
all nodes in the same row and to the right of the corner, and all nodes in the same column and below the corner.
The column-lengths of $[\lambda]$ form the {\it conjugate}
partition $\lambda^*$ of $n$. Partitions where
$\lambda=\lambda^{*}$ are {\it self-conjugate}. Let $\lambda=\lambda^*$ and
$\delta(\lambda)=\{\delta_{jj}\}$ be the set of {\it diagonal hooks} of
$\lambda$ i.e. $\delta_{jj}=h_{jj},$ which are necessarily odd. When there is no ambiguity we write $h^{\lambda}_{ij}=h_{ij}$.

Every $\lambda$ is expressed uniquely in terms of its
{\it p-core} $\lambda^0$ and {\it p-quotient} $(\lambda_{0},\lambda_{2},\cdots,\lambda_{p-1})$. The
{\it p-core} $\lambda^0$ is the unique partition that results when all possible
hooks of length $p$ are removed from $\lambda$.  The $p$-quotient $\langle\lambda\rangle$ is a $p$-tuple of (sub-)partitions which encode the $p$-hooks of
$\lambda$.

Henceforth, let $\Pi$ be a set of size $n$ and $G=S(\Pi)$ and $G^+=A(\Pi)$
be respectively the symmetric and alternating groups on $\Pi$.
The elements of $Irr(G)$ are labeled by partitions $\{\lambda\vdash n\}.$ Then $Irr(G^+)$ is obtained from $Irr(G)$
by restriction.  If $\alpha$ is an irreducible character for some finite group $J$, and $K$ is a subgroup of $J$, the notation $\alpha |_{K}$ indicates restriction of the subgroup $K.$
\begin{theorem}\label{thmFrob1}
The irreducible characters of $G^+$ arise from those of $G$
in two ways. If $\lambda\neq\lambda^{*}$ then $\chi_{\lambda}|_{G^+}=\chi_{\lambda^{*}}|_{G^+}$ is in $Irr(G^+)$. If $\lambda=\lambda^{*}$ then $\chi_{\lambda}|_{G^+}$ splits into two
conjugate characters $\chi_{\lambda}^{+}$ and $\chi_{\lambda}^{-}$
in $Irr(G^+)$.
\end{theorem}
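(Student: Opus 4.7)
The plan is to apply Clifford theory for the index-two normal subgroup $G^+ \trianglelefteq G$. Since $[G:G^+]=2$, the sign representation $\mathrm{sgn}$ of $G$ has kernel $G^+$, and for any $\chi\in\mathrm{Irr}(G)$ the restriction $\chi|_{G^+}$ either remains irreducible or splits as a sum $\chi^+ + \chi^-$ of two distinct $G$-conjugate irreducibles of $G^+$. The standard Clifford-theoretic dichotomy I would invoke is: $\chi|_{G^+}$ is irreducible if and only if $\chi\otimes\mathrm{sgn}\neq\chi$, and it splits into two conjugate irreducibles if and only if $\chi\otimes\mathrm{sgn}=\chi$.

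The key algebraic input is the classical identity $\chi_\lambda\otimes\mathrm{sgn}=\chi_{\lambda^*}$ for any partition $\lambda\vdash n$, which I would quote from the standard representation theory of the symmetric group (e.g.\ via Specht modules, or the Murnaghan--Nakayama rule combined with the fact that conjugation of a partition multiplies each column hook by the appropriate sign). Given this identity, the two cases of the theorem follow immediately. First, if $\lambda\neq\lambda^*$ then $\chi_\lambda\otimes\mathrm{sgn}=\chi_{\lambda^*}\neq\chi_\lambda$, so $\chi_\lambda|_{G^+}$ is irreducible; moreover since $\mathrm{sgn}|_{G^+}$ is trivial we have $\chi_{\lambda^*}|_{G^+}=(\chi_\lambda\otimes\mathrm{sgn})|_{G^+}=\chi_\lambda|_{G^+}$, giving the stated coincidence. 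Second, if $\lambda=\lambda^*$ then $\chi_\lambda\otimes\mathrm{sgn}=\chi_\lambda$, so by the dichotomy $\chi_\lambda|_{G^+}$ splits as $\chi_\lambda^+ + \chi_\lambda^-$ with the two summands conjugate under any fixed element of $G\setminus G^+$.

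Steps I would carry out in order: (i) set up notation for the sign character and state the index-two Clifford dichotomy; (ii) recall/prove $\chi_\lambda\otimes\mathrm{sgn}=\chi_{\lambda^*}$; (iii) apply the dichotomy in the two cases $\lambda\neq\lambda^*$ and $\lambda=\lambda^*$; (iv) observe that restriction identifies $\chi_\lambda|_{G^+}$ with $\chi_{\lambda^*}|_{G^+}$ in the non-self-conjugate case because the two characters differ by $\mathrm{sgn}$. The only nontrivial ingredient is (ii), which is genuinely a theorem about the symmetric group rather than a formal consequence of index-two Clifford theory; everything else is bookkeeping. Since the statement as given in the excerpt is a routine recollection rather than a new result, I would present it succinctly and cite a standard reference such as James--Kerber for step~(ii).
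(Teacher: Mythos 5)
Your argument is correct and is the standard one: the index-two Clifford dichotomy combined with the identity $\chi_\lambda\otimes\mathrm{sgn}=\chi_{\lambda^*}$ is exactly how this classical fact is established. The paper itself states the theorem without proof as a recollection of known results (attributing the surrounding material to Frobenius via \cite{F-H}), so your proposal supplies precisely the argument the paper implicitly relies on; there is nothing to add or correct.
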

The conjugacy classes $\kappa$ of $S(\Pi)$ are labeled by
cycle-types of permutations of $n$.  If $\lambda=\lambda^*$ we let
$\kappa_{\delta(\lambda)}$ be the conjugacy class determined by the
cycle-type of $(\delta_{11},\cdots,\delta_{dd})$. Then
$\kappa_{\delta(\lambda)}$ splits into $\kappa_{\delta(\lambda),+}$
and $\kappa_{\delta(\lambda),-}$ when viewed as a class of $G^+.$
Let $Irr^*(G)$ be the set of {\it splitting characters}, i.e. those
that split into two conjugate characters when restricted to $G^+.$
The following is a classical result of Frobenius (see e.g. Theorem
(4A), \cite{F-H}).
\begin{theorem}\label{Frob}
Suppose $\chi_{\lambda}$ is an irreducible character of $G$ which
splits on $G^+.$ Let $g\in G^+.$ Then
$(\chi_{\lambda,+}-\chi_{\lambda,-})(g)\neq 0$ if and only if $g$ is
in $\kappa_{\delta(\lambda)}.$ Moreover, $\chi_{\lambda,\pm}$ and
$\kappa_{\delta(\lambda),\pm}$ may be labeled so that
\[
\begin{array}{ccc}
\chi_{\lambda}^{\pm}(g)&=&\frac{1}{2}[\epsilon_{\lambda}+
\sqrt{\epsilon_{\lambda}\prod_{j}\delta_{jj}}]\text{\;\; if
$g\in\kappa_{\delta(\lambda),\pm}$}\\
\chi_{\lambda}^{\pm}(g)&=&\frac{1}{2}[\epsilon_{\lambda}-
\sqrt{\epsilon_{\lambda}\prod_{j}\delta_{jj}}]\text{\;\; if
$g\in\kappa_{\delta(\lambda),\mp}$}
\end{array}
\]
where $\epsilon_{\lambda}=(-1)^{\frac{n-d}{2}}$.
\end{theorem}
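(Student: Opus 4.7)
The plan is to split the argument into three steps: locating the support of the difference $\beta := \chi_{\lambda,+} - \chi_{\lambda,-}$, computing $\chi_\lambda$ on that support, and using orthogonality together with the character field to pin down the irrational value exactly.

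First, since conjugation by any odd permutation of $\Pi$ interchanges $\chi_{\lambda,+}$ and $\chi_{\lambda,-}$, the class function $\beta$ automatically vanishes on every non-split $A(\Pi)$-class. The split $A(\Pi)$-classes are exactly the pairs $\kappa_{\mu,\pm}$ arising from $S(\Pi)$-classes $\kappa_\mu$ whose cycle type $\mu$ has all distinct odd parts. For such $\mu \neq \delta(\lambda)$, I would show $\beta$ still vanishes on $\kappa_{\mu,\pm}$ by applying the Murnaghan--Nakayama rule to the self-conjugate shape $\lambda$: the transposition involution on rim-hook stripping sequences pairs non-diagonal strippings with their mirror images, and the resulting cancellation forces $\chi_{\lambda,+}$ and $\chi_{\lambda,-}$ to coincide on both halves of $\kappa_\mu$. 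Only when the sequence consists entirely of diagonal hooks, which forces $\mu = \delta(\lambda)$, does $\beta$ survive.

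Second, I would evaluate $\chi_\lambda$ on $\kappa_{\delta(\lambda)}$ by stripping the diagonal hooks from largest to smallest. By self-conjugacy each diagonal hook $\delta_{jj}$ has equal arm and leg length $(\delta_{jj}-1)/2$, so the Murnaghan--Nakayama sign at each removal is $(-1)^{(\delta_{jj}-1)/2}$. Since $\sum_j \delta_{jj} = n$ and there are $d$ diagonal hooks, multiplying gives
\[
\chi_\lambda(\kappa_{\delta(\lambda)}) = (-1)^{(n-d)/2} = \epsilon_\lambda,
\]
and hence $\chi_{\lambda,+} + \chi_{\lambda,-} = \epsilon_\lambda$ on this class.

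Third, the orthogonality $\langle \beta, \beta \rangle_{A(\Pi)} = 2$, combined with the class-size identity $|\kappa_{\delta(\lambda),\pm}| = |S(\Pi)|/(2\prod_j \delta_{jj})$ (the centralizer in $S(\Pi)$ of a product of disjoint distinct cycles is the product of the cyclic groups they generate), forces $|\beta|^2 = \prod_j \delta_{jj}$ on $\kappa_{\delta(\lambda)}$. To lift this to the correct complex value, I would invoke the standard fact that the character field $\mathbb{Q}(\chi_{\lambda,\pm})$ equals $\mathbb{Q}\bigl(\sqrt{\epsilon_\lambda \prod_j \delta_{jj}}\bigr)$: when $\epsilon_\lambda = 1$ both characters are real, and when $\epsilon_\lambda = -1$ they form a complex-conjugate pair. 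Combining with the sum $\epsilon_\lambda$ computed above and labelling $\kappa_{\delta(\lambda),\pm}$ accordingly yields the displayed formulas. The main obstacle is the first step: showing that the Murnaghan--Nakayama transposition symmetry kills $\beta$ on every non-diagonal split class, rather than merely preserving the total $\chi_\lambda$ value; an alternative is to bypass this via a Schur $Q$-function identity expressing $s_\lambda$ restricted to odd power sums as a signed sum over $Q$-functions indexed by strict partitions, which concentrates $\beta$ on $\delta(\lambda)$ automatically.
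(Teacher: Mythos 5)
The paper gives no proof of this statement at all: it is quoted as a classical theorem of Frobenius with a pointer to Theorem (4A) of Fong--Harris, so there is no internal argument to compare yours against. Judged on its own terms, your outline has the right shape (it is essentially Frobenius's original strategy in steps 2 and 3), but step 1 contains a genuine gap that you yourself flag, and it is fatal as written. The Murnaghan--Nakayama rule computes $\chi_{\lambda}=\chi_{\lambda,+}+\chi_{\lambda,-}$; no involution on rim-hook stripping sequences of $\lambda$ can detect the difference $\beta=\chi_{\lambda,+}-\chi_{\lambda,-}$, because $\chi_{\lambda,\pm}$ are not characters of $S(\Pi)$ and MN gives no access to them separately. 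So "the transposition symmetry kills $\beta$ on non-diagonal split classes" is not something that argument can deliver, and without it step 3 is circular: you use $\langle\beta,\beta\rangle=2$ to extract $|\beta|^{2}=\prod_{j}\delta_{jj}$ on $\kappa_{\delta(\lambda)}$, but that extraction presupposes the support statement you were trying to prove in step 1.

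The standard repair is to run the orthogonality bookkeeping globally rather than one character at a time. The second orthogonality relation for $A(\Pi)$ at a split class $g_{\mu}$ (where $|C_{A(\Pi)}(g_{\mu})|=|C_{S(\Pi)}(g_{\mu})|=\prod_{i}\mu_{i}$) gives $\sum_{\lambda=\lambda^{*}}|\beta_{\lambda}(g_{\mu})|^{2}=\prod_{i}\mu_{i}$, while $\langle\beta_{\lambda},\beta_{\lambda}\rangle=2$ gives the row relation $\sum_{\mu}\bigl(\prod_{i}\mu_{i}\bigr)^{-1}|\beta_{\lambda}(g_{\mu})|^{2}=1$. Since $\lambda\mapsto\delta(\lambda)$ is a bijection from self-conjugate partitions to partitions into distinct odd parts, a single independent computation showing $|\beta_{\lambda}(g_{\delta(\lambda)})|^{2}=\prod_{j}\delta_{jj}$ (done by induction on $n$, or via the determinant/triangularity argument in Frobenius's paper, or your suggested Schur $Q$-function detour) saturates both relations and forces $\beta_{\lambda}$ to vanish everywhere else. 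Your step 2 is correct and is exactly the classical computation of $\chi_{\lambda}(\kappa_{\delta(\lambda)})=\epsilon_{\lambda}$. One further caution on step 3: the "standard fact" that $\mathbb{Q}(\chi_{\lambda,\pm})=\mathbb{Q}\bigl(\sqrt{\epsilon_{\lambda}\prod_{j}\delta_{jj}}\bigr)$ is essentially the conclusion of the theorem, so citing it to fix the sign under the radical is again circular; the honest route is to determine whether $g_{\delta(\lambda)}$ is $A(\Pi)$-conjugate to its inverse, which decides whether $\beta_{\lambda}$ is real or purely imaginary and hence whether $\epsilon_{\lambda}=+1$ or $-1$ appears under the square root.
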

By extension, $Irr^*(G^+)$ is the set of (pairs) of characters that
arise from restricting elements of $Irr^*(G).$ Suppose $n=w p,$ where $w<p$. By a condition of Macdonald (see \cite{Mac}), the elements of $Irr_{p'}(G)$ are labeled by partitions for whom $\sum |\lambda_{\gamma}|= \omega.$ Then the $p'$-splitting characters are labeled by self-conjugate partitions that satisify the Macdonald condition.
%%%%%%%%%%%%%
%%%%%%%%%%%%%%%%%%
\subsection{$p'$-splitting characters of $H$}
\noindent
\\
Let $B$ be a $p$-block of $G$ the defect group $D$ and $b$ the
$p$-block of $N_G(D)$ which is the Brauer correspondent of B. Let
$\nu$ be the exponential valuation of ${\mathbb Z}$  associated with
$p$ normalized so $\nu(p)=1.$ The height of the $\chi$ in $B$ is the
nonnegative integer $h(\chi)$ such that
$\nu(\chi(1))=\nu(|G|)-\nu(|D|)+h(\chi).$ The height  of $\xi$ in
$b$ is the nonnegative integer $h(\xi)$ such that
$\nu(\xi)=\nu(|N_{G}(D)|)-\nu(|D|)+h(\xi).$ Let $M(B)$ and $M(b)$ be
the characters of $B$ and $b$ of height zero. By the Nakayama
conjecture a $p$-block $B$ of $G$ is parametrized by a $p$-core
$\lambda^0$ so $\chi_{\mu}\in B$ if and only if $\lambda^0=\mu^0.$
In particular, $n=n_0+wp$ where $n_0=|\lambda_0|$. We suppose that
$B$ has abelian defect group $D$ or equivalently $w<p.$ Thus
$\Pi=\Pi_0\cup \Pi_1$ is the disjoint union of sets $\Pi_0$ and
$\Pi_1$ of cardinality $n_0$ and $wp$.  We may suppose
$\Pi_1=\Gamma\times \Omega$ where $\Gamma=\{1,2,\cdots,p\}$ and
$\Omega$ is a set of $w$ elements. Let $X=S(\Gamma)$ and $Y=N_X(P)$
where $P$ is a fixed Sylow $p$-subgroup of $X$. Note that the when
$B$ is a Sylow subgroup the $p'$-irreducible characters agree with
the height zero characters.

We take $D$ as the Sylow $p$-subgroup $P^{\Omega}$ of $S(\Pi_1)$ and set $H=N_G(D)$ so that $H=H_0\times H_1$
with $H_0=S(\Pi_0)$ and $H_1=Y\wr S(\Omega).$ The Brauer correspondent $b$ of $B$ in $H$
has the form $b_0\times b_1$ where $b_0$ is the block of defect 0 of $H_0$ parametrized by $\lambda^0$ and $b_1$ is the principal block of $H_1$.

Let $\lambda$ be a partition of $n$ with $p$-core $\lambda^0$ and $p$-quotient $\langle\lambda\rangle=(\lambda_0,\cdots,\lambda_{p-1})$ normalized as follows: if $\mu=\lambda^*$
then $\lambda_i=(\mu_{p-i-1})^*$. Let $p^*=\frac{p-1}{2}.$ Then $\lambda=\lambda^*$ implies $\lambda_{p^*}=\lambda^*_{p^*}$. Let $Y^{\vee}=\{\xi_{\gamma}:0\leq \gamma\leq p-1\}.$ The characters in $H^{\vee}$ have the form $\chi_{\tau}\times \psi_{\Lambda}$ where $\tau$ is a $p$-core partition and
$\chi_{\tau} \in Irr(H_0)$ and $\psi_{\Lambda}\in Irr(H_1)$ and $\Lambda$ is a mapping
\[
Y^{\vee}\longrightarrow\{\textit{Partitions}\},\;\; \xi_{\gamma}\mapsto \mu_{\gamma},
\]
such that $\sum_{\gamma}|\mu_{\gamma}|=w.$ We also represent
$\Lambda$ by the $p$-tuple $(\mu_1,\cdots,\mu_p).$ Then $M(B)$ and
$M(b)$ are in bijection via $f:{\chi_{\lambda}}\mapsto
\chi_{\lambda^0}\times\psi_{\langle\lambda\rangle}$ (see \cite{Fo}
for details). Hence $Irr_{p'}(G)$ and $Irr_{p'}(H)$ are in bijection
via $f=\cup_B f_B.$ There is an induced bijection $f^+$ between
$Irr_{p'}(G^+)$ and $Irr_{p'}(N_{G^+}(D))$. Let $sgn_H=sgn_{G}|_{H}$
and $sgn_Y=sgn_X|_Y$.  If $(f,\sigma)$ is an element of $H=Y\wr
S(\Omega)$ with $f\in S(\Omega)$ and $f\in Y^{\Omega}$ and
$\sigma\in S(\Omega)$, then
\[
sgn_H(f,\sigma)=sgn_{S(\Omega)}(\sigma)\prod_{i\in \Omega}sgn_{Y}(f(i)).
\]
Let $H^+=N_{G^+}(D)$. Then $\Lambda$ is a {\it splitting
mapping} of $H$ if $\psi_{\Lambda}$ {\it splitting character} of $H$ i.e. $(\psi_{\Lambda})|_{H^+}=\psi_{\Lambda,+}-\psi_{\Lambda,-}$
where $\psi_{\Lambda,\pm}\in (H^+)^{\vee}$. Let $^*$ be the duality $\Lambda\mapsto \Lambda^*$ where $\Lambda^*:\xi_{\gamma}\mapsto (\lambda_{p-1-\gamma})^*.$
The following is Proposition (4D) in \cite{F-H}.
\begin{proposition}\label{pwsplits} Let $\psi_{\Lambda}\in Irr(H)$. Then $sgn_H \psi_{\Lambda}=\psi_{\Lambda^*}$. In particular, $\psi_{\Lambda}$ is a splitting character
if and only if $\Lambda=\Lambda^*.$
\end{proposition}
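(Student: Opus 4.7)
The plan is to compute $sgn_H\cdot\psi_\Lambda$ explicitly using the wreath-product structure of $H_1 = Y\wr S(\Omega)$, and then deduce the splitting criterion from standard Clifford theory for the index-two inclusion $H^+\subset H$.

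First, I would use the given formula $sgn_H(f,\sigma)=sgn_{S(\Omega)}(\sigma)\prod_{i\in\Omega}sgn_Y(f(i))$ to realize $sgn_H|_{H_1}$ as a product of two one-dimensional characters of $H_1$: $sgn_{S(\Omega)}$ pulled back through the quotient $H_1\twoheadrightarrow S(\Omega)$, and $sgn_Y$ pulled back coordinate-wise from the base $Y^{\Omega}$. Next, I would invoke the standard parametrization of irreducibles of a wreath product: writing $\mu_\gamma=\Lambda(\xi_\gamma)$, the character $\psi_\Lambda$ is obtained by taking the base character $\bigotimes_\gamma \xi_\gamma^{\otimes|\mu_\gamma|}$, extending it to the inertia subgroup $\prod_\gamma (Y\wr S_{|\mu_\gamma|})$, tensoring with the external product $\bigotimes_\gamma\chi_{\mu_\gamma}$ of symmetric-group irreducibles, and inducing up to $H_1$. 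Multiplying by the $sgn_{S(\Omega)}$-factor restricts on the inertia subgroup to $\prod_\gamma sgn_{S_{|\mu_\gamma|}}$ and so converts each $\chi_{\mu_\gamma}$ into $\chi_{\mu_\gamma^*}$; multiplying by the base-wise $sgn_Y$-factor permutes the base labels through $\xi_\gamma\mapsto sgn_Y\cdot\xi_\gamma$. Combining these effects gives $sgn_H\cdot\psi_\Lambda=\psi_{\Lambda'}$ where $\Lambda'(\xi_\gamma)=\Lambda(sgn_Y\cdot\xi_\gamma)^*$.

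The main obstacle is then to identify $\Lambda'$ with the duality $\Lambda^*\colon \xi_\gamma\mapsto\mu_{p-1-\gamma}^*$ from the setup. This reduces to the single combinatorial identity $sgn_Y\cdot\xi_\gamma=\xi_{p-1-\gamma}$, a statement about the character theory of the Frobenius group $Y=N_{S_p}(P)$ of order $p(p-1)$: the authors' labeling of $Y^\vee=\{\xi_\gamma:0\leq\gamma\leq p-1\}$ is chosen so that $sgn_Y$, which factors through $Y^{\mathrm{ab}}\cong C_{p-1}$, acts on $Y^\vee$ by the involution $\gamma\leftrightarrow p-1-\gamma$. Verifying this requires computing $sgn_{S_p}|_Y$ and tracing it through the explicit character table of the affine group $Y$, treating the $p-1$ linear characters and the unique $(p-1)$-dimensional irreducible separately.

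Finally, for the ``in particular'' clause, I would apply standard Clifford theory to the index-two inclusion $H^+\leq H$: an irreducible $\psi_\Lambda$ restricts irreducibly to $H^+$ precisely when $sgn_H\cdot\psi_\Lambda\neq\psi_\Lambda$, and decomposes into two distinct conjugate irreducibles $\psi_{\Lambda,\pm}$ of $H^+$ precisely when $sgn_H\cdot\psi_\Lambda=\psi_\Lambda$. By the first part, this last equality is equivalent to $\Lambda=\Lambda^*$, yielding the characterization of splitting characters.
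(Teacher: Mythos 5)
This proposition is stated in the paper without proof---it is quoted as Proposition (4D) of \cite{F-H}---so there is no in-paper argument to compare against; measured against the standard proof (which is what Fong--Harris give), your outline is essentially correct. Factoring $sgn_{H_1}$ as the inflation of $sgn_{S(\Omega)}$ times the base-wise product of $sgn_Y$, pushing both factors through the induction defining $\psi_\Lambda$ by the projection formula to get $sgn_{H_1}\psi_\Lambda=\psi_{\Lambda'}$ with $\Lambda'(\xi_\gamma)=\Lambda(sgn_Y\cdot\xi_\gamma)^*$, and then reading off the splitting criterion from Clifford theory for the index-two subgroup $H^+$, is exactly the right route. The one step you announce but do not carry out is the identity $sgn_Y\cdot\xi_\gamma=\xi_{p-1-\gamma}$. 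You are right that this is ultimately a statement about the labeling of $Y^\vee$, which neither this paper nor your write-up fixes; what \emph{can} be verified independently of conventions, and would make your argument self-contained, is that tensoring with $sgn_Y$ is an involution of $Y^\vee$ with exactly one fixed point: $sgn_Y$ is nontrivial on $Y=N_{S(\Gamma)}(P)$ (a generator of $Y/P\cong C_{p-1}$ acts on $\Gamma$ as a $(p-1)$-cycle, which is odd when $p$ is odd) but trivial on $P$, so it fixes the unique degree-$(p-1)$ character $\mathrm{Ind}_P^Y\theta$ and pairs off the $p-1$ linear characters. Hence a labeling with $sgn_Y\xi_\gamma=\xi_{p-1-\gamma}$ exists, with the self-paired character placed at $\gamma=p^*=\frac{p-1}{2}$, and this is forced by the paper's conventions (the $p$-singular mappings are exactly those concentrated at position $p^*$). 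One further small point worth a sentence in a full write-up: in the wreath-product computation you should note that the canonical extension of $(sgn_Y\xi_\gamma)^{\otimes m}$ to $Y\wr S_m$ is the canonical extension of $\xi_\gamma^{\otimes m}$ multiplied by the restriction of the base-wise sign character, so that twisting the base introduces no extra sign on the $S_m$-factor; this is true for the usual tensor-space model of the extension, but it is precisely the kind of cocycle issue that can silently break such arguments.
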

Proposition \ref{pwsplits} implies that map $f^+$ induced by $f$ remains a bijection on splitting characters (and $p'$-splitting characters). That is, $Irr^*_{p'}(G^+)$ is in bijection with $Irr^*_{p'}(H^+).$  In particular, if $\lambda\neq\lambda^*$ then $\chi_{\lambda} |_{G^+}=\chi_{\lambda}^* |_{G^+}$ is mapped to $\psi_{\Lambda} |_{H^+}=\psi_{\Lambda^*} |_{H^+}$ and if $\lambda=\lambda^*$ then $\chi^{\pm}_{\lambda}$ maps to $\psi^{\pm}_{\Lambda}$.

%%%
\section{Values of $p$-singular characters}
\noindent
We say $\lambda$ is {\it p-singular} if $\lambda_{p^*}\neq \emptyset$ and $\lambda_i=\emptyset$ for all $i\in\{0,\cdots,p-1\}-p^*$.
Then $\chi_{\lambda}\in Irr_{p'}(G)$ is $p$-singular if $\lambda$ is. The notation $Irr_{p', sing}(G)$ denotes the $p$-singular $p'$-characters and $Irr_{p',sing}(G^+)$ is the restrictions to $G^+$. Then $Irr_{p', sing}(H)$ and $Irr_{p',sing}(H^+)$ are defined analogously. It is immediate from the definition of $f^+$ that $Irr_{p', sing}(G^+)$ and $Irr^*_{p',sing}(H^+)$ are in bijection. We show that $f^+$ commutes with the action of $\sigma\in \mathcal{N}$ on $p$-singular $p'$-characters by describing explicitly the relevant irrational character values.

In \cite{Na1}, the author describes how to obtain the set of diagonal hooks $\delta(\lambda)$ of a symmetric partition $\lambda=\lambda^*$ given just the $p$-core $\lambda^0$ and the $p$-quotient $\langle \lambda \rangle.$ The following special case (Theorem 4.3 in \cite{Na1}) is relevant to the goals of this paper.
\begin{theorem} \label{diagonal_D}
Suppose $\lambda^0$ is empty and $(\emptyset,\cdots,\lambda_{p^*},\cdots,\emptyset)$ such that $\lambda_{p^*}=(\lambda_{p^*})^*$ and $\delta(\lambda_{p^*})$=$(\delta'_{11},\cdots,\delta'_{dd}).$ Then $\delta(\lambda)$=$(\delta'_{11}p,\cdots,\delta'_{dd}p).$
\end{theorem}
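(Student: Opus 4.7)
The plan is to encode $\lambda$ and $\lambda_{p^*}$ on a $p$-runner abacus with total bead count $r=kp$ chosen divisible by $p$, so that the hypothesis $\lambda^0=\emptyset$ translates to each of the $p$ runners carrying exactly $k$ beads. Under the further hypothesis that $\langle\lambda\rangle$ is concentrated at position $p^*$, the runners $\gamma\neq p^*$ carry the trivial configurations (local positions $\{0,1,\ldots,k-1\}$, i.e.\ abacus positions in $\{\gamma,p+\gamma,\ldots,(k-1)p+\gamma\}$, all strictly less than $r$), while runner $p^*$ carries a configuration encoding $\lambda_{p^*}$: writing $c_i=(\lambda_{p^*})_i+k-i$ for $1\leq i\leq k$, the beads on runner $p^*$ sit at abacus positions $c_ip+p^*$, and the full bead set of $\lambda$ is their union with the trivial-runner beads.

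Next I would record the standard $\beta$-set reading of diagonal hooks: for any self-conjugate partition $\mu$ with descending bead sequence $B=\{b_1>b_2>\cdots>b_r\}$ of size $r$ (so $b_j=\mu_j+r-j$), the diagonal hooks of $\mu$ are precisely $\{2b-2r+1 : b\in B,\ b\geq r\}$. This follows from $\delta_{jj}(\mu)=2(\mu_j-j)+1=2b_j-2r+1$ together with the observation that $(j,j)$ lies in the diagram exactly when $\mu_j\geq j$, i.e.\ when $b_j\geq r$. Applied to $\lambda$, every bead on a runner $\gamma\neq p^*$ sits at an abacus position at most $(k-1)p+(p-1)=kp-1<r$, and so contributes nothing to $\delta(\lambda)$; only runner $p^*$ matters.

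The remaining step is a short calculation. A bead at abacus position $c_ip+p^*$ on runner $p^*$ satisfies $c_ip+p^*\geq kp$ iff $c_i\geq k$ (using $0\leq p^*<p$), which is exactly the condition that the corresponding cell of $\lambda_{p^*}$ is diagonal with diagonal hook $\delta'_{jj}=2c_i-2k+1$. The induced diagonal hook of $\lambda$ is
\[
2(c_ip+p^*)-2kp+1 \;=\; p(2c_i-2k)+(2p^*+1) \;=\; p(2c_i-2k+1) \;=\; p\,\delta'_{jj},
\]
using the identity $2p^*+1=p$. Since the map $c_i\mapsto c_ip+p^*$ is order-preserving, reading off diagonal hooks in decreasing order yields $\delta(\lambda)=(p\,\delta'_{11},\ldots,p\,\delta'_{dd})$.

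The step I expect to require the most care is the $\beta$-set formula for diagonal hooks of a self-conjugate partition and the bookkeeping needed to run it consistently in size $k$ (for $\lambda_{p^*}$) and in size $kp$ (for $\lambda$). Once it is in hand, the entire theorem is driven by the single identity $2p^*+1=p$, which is the only place the oddness of $p$ is used.
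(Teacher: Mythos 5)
Your argument is correct: the translation of ``empty $p$-core, $p$-quotient concentrated at $p^*$'' into an abacus with $r=kp$ beads, the identity $\delta_{jj}=2b_j-2r+1$ for self-conjugate partitions, and the computation $2(c_ip+p^*)-2kp+1=p(2c_i-2k+1)$ via $2p^*+1=p$ together give a complete proof. Note that this paper does not actually prove the statement---it is imported as Theorem~4.3 of \cite{Na1}---but your $\beta$-set/abacus argument is the standard route to it and is consistent with the source.
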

A conjugacy class $C$ of $H$ is a {\it splitting class} if $C\subseteq H^+$ and $C=C_-\cup C_+$ is the union of two conjugacy classes of $H^+$.  There is a bijection between splitting mappings $\Lambda$ and splitting classes
$C_{\Lambda}$ of $H$ (see pg.3491, \cite{F-H}). The following is Proposition (4F) in \cite{F-H}.
\begin{theorem} \label{psingular-irrational} Let $|\Pi|=wp.$ Suppose $\Lambda$ is a splitting mapping of $N_{S(\Pi)}(D)$ that equals its $p$-singular part
i.e. $\Lambda=$ $(\emptyset,\cdots,\lambda_{p^*},\cdots,\emptyset)$.
Let $(f,\sigma)\in N_{A(\Pi)}(D)^+$. Then
$(\psi_{\Lambda,+}-\psi_{\Lambda,-})(f,\sigma)$$\neq 0$ if and only
if $(f,\sigma)\in C_{\Lambda}.$ Moreover, $\psi_{\Lambda,\pm}$ and
$C_{\Lambda,\pm}$ may be labeled so that
\[
(\psi_{\Lambda,+}-\psi_{\Lambda,-})(f,\sigma)\\=\pm(\sqrt{\epsilon_p
p})^d \sqrt{\epsilon_{\lambda_{p^*}\prod_j\eta_{jj}}}
\]
for $(f,\sigma)\in C_{\Lambda,\pm},$ where,
$\epsilon_{\lambda_{p^*}}=(-1)^{\frac{p-1}{2}}$, $d$ is the number
of diagonal nodes in $\lambda_{p^*}$ and
$\delta(\lambda_{p^*})$$=(\eta_{11},\cdots,\eta_{dd}).$
\end{theorem}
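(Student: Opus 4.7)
My plan is to carry over the Frobenius calculation behind Theorem \ref{Frob} from the symmetric group $S(\Pi)$ to the wreath product $H = Y \wr S(\Omega)$, exploiting the fact that the $p$-singular $\Lambda = (\emptyset,\ldots,\lambda_{p^*},\ldots,\emptyset)$ is concentrated at the single component $\xi_{p^*}$. Under this concentration, the standard character formula for wreath products specializes and expresses $\psi_\Lambda$ as a product of a $\chi_{\lambda_{p^*}}$-value on the top-group projection of $(f,\sigma)$ with factors of $\xi_{p^*}$ evaluated at the cycle products of $f$ along the cycles of $\sigma$. I would begin by writing this formula explicitly: for $(f,\sigma)\in H$ with $\sigma$ having cycle type $(\ell_1,\ldots,\ell_m)$, one reads off $\psi_\Lambda(f,\sigma) = \chi_{\lambda_{p^*}}(\sigma)\prod_k \xi_{p^*}(g_k)$, where $g_k\in Y$ is the product of base-group entries along the $k$-th cycle of $\sigma$.

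Next I would identify the splitting class $C_\Lambda$ using the bijection between splitting mappings and splitting classes recalled on p.~3491 of \cite{F-H}. For the $p$-singular $\Lambda=\Lambda^*$, I expect $C_\Lambda$ to consist of the elements $(f,\sigma)$ whose top projection $\sigma$ has the self-conjugate cycle type $\delta(\lambda_{p^*})=(\eta_{11},\ldots,\eta_{dd})$ and whose $d$ cycle products lie in the splitting $Y$-conjugacy class associated to $\xi_{p^*}$, namely the class of a $p$-cycle of $P$. The vanishing assertion, that $\psi_{\Lambda,+}-\psi_{\Lambda,-}$ is supported on $C_\Lambda$, then reduces to the general principle that such a difference is supported on the $H$-classes that split in $H^+$: the cycle-type constraint forced by $\chi_{\lambda_{p^*}}$ via Theorem \ref{Frob} applied in $S(\Omega)$ and the cycle-product constraint forced by $\xi_{p^*}$ together single out exactly $C_\Lambda$.

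To evaluate the nonzero irrationality on $C_{\Lambda,\pm}$ I would substitute a representative into the wreath product formula. Each of the $d$ cycle-product factors $\xi_{p^*}(g_k)$ evaluates on a $p$-cycle of $Y = P\rtimes C_{p-1}$ to a Gauss-sum-like value $\pm\sqrt{\epsilon_p p}$ with $\epsilon_p=(-1)^{(p-1)/2}$. The factor $\chi_{\lambda_{p^*}}$ on $\kappa_{\delta(\lambda_{p^*})}$ is provided directly by Theorem \ref{Frob}, contributing the rational half $\tfrac12 \epsilon_{\lambda_{p^*}}$ together with the irrational $\tfrac12\sqrt{\epsilon_{\lambda_{p^*}}\prod_j\eta_{jj}}$. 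Forming $\psi_{\Lambda,+}-\psi_{\Lambda,-}$ cancels the rational halves and doubles the square roots, giving $\pm(\sqrt{\epsilon_p p})^d\sqrt{\epsilon_{\lambda_{p^*}}\prod_j\eta_{jj}}$ once the labels on $\psi_{\Lambda,\pm}$ and $C_{\Lambda,\pm}$ are chosen compatibly.

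The step I expect to be the main obstacle is exactly this coherent labeling: simultaneously tracking the branch of $\sqrt{\epsilon_{\lambda_{p^*}}\prod_j\eta_{jj}}$ coming from the $S(\Omega)$-splitting and the branches of the $d$ factors $\sqrt{\epsilon_p p}$ coming from the $Y$-splittings, and matching them against the two halves of $C_\Lambda$ in $H^+$. A clean consistency check, which I would use to fix all signs, is to compare against Theorem \ref{Frob} applied to the global self-conjugate $\lambda\vdash wp$ with empty $p$-core and $p$-quotient $(\emptyset,\ldots,\lambda_{p^*},\ldots,\emptyset)$: by Theorem \ref{diagonal_D}, $\delta(\lambda)=p\cdot\delta(\lambda_{p^*})$, so $\prod_j\delta_{jj}=p^d\prod_j\eta_{jj}$, and the irrationality $\sqrt{\epsilon_\lambda\prod_j\delta_{jj}}$ of $\chi_\lambda^\pm$ at $\kappa_{\delta(\lambda)}$ must agree, up to the sign $\epsilon_\lambda=\epsilon_{\lambda_{p^*}}\epsilon_p^d$, with the local formula computed above.
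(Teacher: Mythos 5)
First, a point of reference: the paper does not prove this statement at all --- it is quoted as Proposition (4F) of \cite{F-H}, so there is no internal argument to compare yours against. Your skeleton is the right one and matches how Fong and Harris set the computation up: for $\Lambda$ concentrated at the unique $sgn_Y$-stable character $\xi_{p^*}$ of $Y=P\rtimes C_{p-1}$ (the induced character $\mathrm{Ind}_P^Y\theta$ of degree $p-1$), the wreath-product formula $\psi_\Lambda(f,\sigma)=\chi_{\lambda_{p^*}}(\sigma)\prod_k\xi_{p^*}(g_k)$ holds, $C_\Lambda$ is the class with top cycle type $\delta(\lambda_{p^*})$ and cycle products generating $P$, and the closing consistency check against Theorem \ref{diagonal_D} and Theorem \ref{Frob} is exactly the comparison the paper itself exploits in its main theorem.

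The genuine gap is in the evaluation step, and it is not merely the sign-tracking you flag as the ``main obstacle.'' You compute $(\psi_{\Lambda,+}-\psi_{\Lambda,-})(f,\sigma)$ factor by factor, pairing the $+$ halves of $\chi_{\lambda_{p^*}}$ and of the $\xi_{p^*}(g_k)$ against the $-$ halves and asserting that the rational parts cancel and the radicals multiply. That identity is false. Writing $a=\epsilon_{\lambda_{p^*}}$, $s=\sqrt{\epsilon_{\lambda_{p^*}}\prod_j\eta_{jj}}$, $t=\sqrt{\epsilon_p p}$, the halves are $\tfrac12(a\pm s)$ and $\tfrac12(-1\pm t)$ (note $\xi_{p^*}(g)=-1$ on $P\setminus\{1\}$, not $\pm\sqrt{\epsilon_pp}$; only the difference of its two constituents on $Y^+$ is the Gauss sum), and already for $d=1$ the difference of products is $\tfrac12(at-s)$, not $st$. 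The underlying reason is structural: $\psi_\Lambda|_{H^+}=\psi_{\Lambda,+}+\psi_{\Lambda,-}$ is a single Clifford splitting over the index-$2$ subgroup $H^+=\ker(sgn_H)$, which strictly contains any product of the subgroups $Y^+$ and $A(\Omega)$, so the constituents $\psi_{\Lambda,\pm}$ are not products of constituents of the tensor factors and their difference cannot be assembled componentwise. The correct route is indirect: $\Delta_\Lambda=\psi_{\Lambda,+}-\psi_{\Lambda,-}$ vanishes off splitting classes because $\psi_{\Lambda,-}=\psi_{\Lambda,+}^t$ for $t\in H\setminus H^+$; a counting-plus-orthogonality argument (the number of splitting mappings equals the number of splitting classes, $\|\Delta_\Lambda\|^2=2$, and $\langle\Delta_\Lambda,\Delta_{\Lambda'}\rangle=0$) pins the support of $\Delta_\Lambda$ to the single class $C_\Lambda$ --- a step your sketch asserts rather than proves --- and then $|\Delta_\Lambda(x)|^2=|C_{H^+}(x)|=p^{d}\prod_j\eta_{jj}$ follows, with the $\epsilon$'s inside the radical forced by rationality of $\psi_\Lambda$ and the field generated by $\Delta_\Lambda(x)$. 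Your final global-versus-local consistency check only constrains $\Delta_\Lambda(x)^2$, so it cannot repair the missing derivation.
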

Suppose $\sigma\in Gal(\mathbb{Q}_{|G^+|}/\mathbb{Q})$ is such that $\sigma(\xi)=\xi^{p^e}$ for some $e\in \mathbb{Z}^+$ and $\xi$ is a $p'$-root of unity. We define $Irr_{p'}(B_1)$ and $Irr_{p'}(b_1)$ to be the $p'$-characters of the principal block $B_1$ of $A(\Pi)$ and its Brauer correspondent $b_1$ and $Irr_{p', sing}(B_1)$ and $Irr_{p',sing}(b_1)$ are defined by extension.
\begin{theorem} Let $A(\Pi)$ be the alternating group on $\Pi$ and $p$ is an odd prime such that $A(\Pi)$ has an abelian defect group. Let $\sigma\in \mathcal{N}.$ Let $B_1$ be the principal block of $A(\Pi)$, $\chi\in Irr_{p'}(B_1)$ and $b_1$ its Brauer correspondent. Then the restriction of $f^+$ is a bijection between $Irr_{p',sing}(B_1)$ and $Irr_{p',sing}(b_1)$  that commutes with $\sigma.$ That is, $f^+(\chi)^{\sigma}=f^+(\chi^{\sigma})$.
\end{theorem}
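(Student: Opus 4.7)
The plan is to split $Irr_{p',sing}(B_1)$ according to whether the indexing partition $\lambda$ is self-conjugate, and in each case to track the irrational values of the corresponding characters using Theorems \ref{Frob}, \ref{diagonal_D}, and \ref{psingular-irrational}. Throughout we work in the regime $\lambda^0=\emptyset$ (equivalently $|\Pi|=wp$), under which these formulas apply.

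In the non-splitting case $\lambda\neq\lambda^*$, the restriction $\chi_\lambda|_{G^+}$ is irreducible with integer values (the character table of $S(\Pi)$ being integer-valued), hence fixed by every element of $\mathcal{G}$. Its image $\psi_\Lambda|_{H^+}$ under $f^+$ comes from a $p$-singular $\Lambda$ with only $\xi_{p^*}\mapsto\lambda_{p^*}$ non-empty. Every irreducible character of $Y\cong C_p\rtimes C_{p-1}$ takes values in $\mathbb{Q}(\zeta_{p-1})$, and the wreath-product character values are polynomials in these together with the integer values of $\chi_{\lambda_{p^*}}$. Since $p\equiv 1\pmod{p-1}$, any $\sigma\in\mathcal{N}$ acts trivially on $\mathbb{Q}(\zeta_{p-1})$, so $\psi_\Lambda$, and therefore $\psi_\Lambda|_{H^+}$, is $\sigma$-fixed. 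Commutation is automatic on this part.

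In the splitting case $\lambda=\lambda^*$ (equivalently $\Lambda=\Lambda^*$), Theorems \ref{Frob} and \ref{diagonal_D} combine to express the irrational part of $\chi^\pm_\lambda$ on a splitting class as $\tfrac{1}{2}\sqrt{\epsilon_\lambda\,p^d\prod_j\delta'_{jj}}$, while Theorem \ref{psingular-irrational} gives the irrational part of $\psi^\pm_\Lambda$ as $(\sqrt{\epsilon_p p})^d\sqrt{\epsilon_{\lambda_{p^*}}\prod_j\eta_{jj}}$ with $\eta_{jj}=\delta'_{jj}$. A parity argument, using $n=wp$ and $w\equiv d\pmod 2$ (forced by self-conjugacy of $\lambda_{p^*}$), identifies the two irrationalities up to a rational sign as $(\sqrt{\epsilon_p p})^d\sqrt{c\prod_j\delta'_{jj}}$ with $c\in\{\pm 1\}$. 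Because $w<p$, each $\delta'_{jj}$ is coprime to $p$, so this element lies in the compositum $\mathbb{Q}(\zeta_p)\cdot\mathbb{Q}(\zeta_N)$ for some integer $N$ coprime to $p$. Any $\sigma\in\mathcal{N}$ fixes $\mathbb{Q}(\zeta_p)$ pointwise (hence fixes $\sqrt{\epsilon_p p}$) and acts on the complementary $p'$-radical $\sqrt{c\prod\delta'_{jj}}$ by the $p^e$-power Frobenius, identically on the global and local sides. Consequently $\sigma$ fixes $\{\chi^+_\lambda,\chi^-_\lambda\}$ pointwise or swaps the pair precisely when it does the same to $\{\psi^+_\Lambda,\psi^-_\Lambda\}$, yielding $f^+(\chi^\sigma)=f^+(\chi)^\sigma$.

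The main obstacle is the splitting-case identification of the global irrationality $\sqrt{\epsilon_\lambda\,p^d\prod_j\delta'_{jj}}$ with the local one $(\sqrt{\epsilon_p p})^d\sqrt{\epsilon_{\lambda_{p^*}}\prod_j\eta_{jj}}$ up to a rational sign; this is what licenses the Galois-theoretic argument in the preceding paragraph. It requires Theorem \ref{diagonal_D} to convert global diagonal hooks into local ones, plus a parity computation collapsing the signs $\epsilon_\lambda,\epsilon_p,\epsilon_{\lambda_{p^*}}$ into a single $\pm 1$. Once this identification is made, the Navarro commutation reduces to the general principle that $\sqrt{\epsilon_p p}\in\mathbb{Q}(\zeta_p)$ is $\mathcal{N}$-invariant while the complementary $p'$-radical is acted on in the same way on both sides.
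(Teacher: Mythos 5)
Your proof follows essentially the same route as the paper's: rationality on both sides when $\lambda\neq\lambda^*$ (your justification via $\mathbb{Q}(\zeta_{p-1})$ being fixed by $\sigma_e$ since $p\equiv 1\pmod{p-1}$ is in fact more explicit than the paper's), and for $\lambda=\lambda^*$ the identification $\prod_j\delta_{jj}=p^{d}\prod_j\eta_{jj}$ from Theorem \ref{diagonal_D} combined with the sign identity $\epsilon_p^{d}\,\epsilon_{\lambda_{p^*}}=\epsilon_\lambda$ (which uses $w\equiv d\pmod 2$), so that the global and local irrationalities coincide up to a rational sign and $\sigma$ acts on them identically. One small correction: elements of $\mathcal{N}$ are only constrained on $p'$-roots of unity, so $\sigma$ need not fix $\mathbb{Q}(\zeta_p)$ pointwise and the Gauss sum $\sqrt{\epsilon_p p}$ is not automatically $\sigma$-invariant; this is harmless, since that factor appears identically on the global and local sides and the argument only needs $\sigma$ to treat the two coinciding radicals the same way.
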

\begin{proof}
Since $A(\Pi)$ has abelian defect, and we are considering only the principal block, we can assume $|\Pi|=wp.$ By the discussion above, we consider two cases.
\begin{enumerate}
\item Suppose $\lambda\neq\lambda^*.$ Then the restrictions $\chi_{\lambda} |_{G^+}=\chi_{\lambda^*} |_{G^+}$ are in bijection with $\psi_{\Lambda^*} |_{H^+}=\psi_{\Lambda^*} |_{H^+}$.  Since the values of $\chi_{\lambda}$ are all rational, $\chi_{\lambda } |_*$ is $\sigma$-fixed.  Since $N_{G^+}(X)=Y\wr S(\Omega)$ where $|\Omega|=p$, $\psi_{\Lambda} |_{H^+}$ is also $\sigma$-fixed.
\item Suppose $\lambda=\lambda^*.$  Upon restriction, the pair $\chi^{\pm}$ is in bijection with the pair $\psi^{\pm}_{\Lambda}$ via $\bar{f}$ .  It remains to show that the values of $\chi^{\pm}_{\lambda}$ and $\psi^{\pm}_{\Lambda}$ on the splitting classes $\kappa^{\pm}_{\delta(\lambda)}$ and $C^{\pm}_{\Lambda}$ are both exchanged or fixed by $\sigma$.
By Theorem \ref{diagonal_D}, Theorem \ref{Frob},  and Theorem \ref{psingular-irrational}, $\sqrt{\eta_jp^d}=\sqrt{\delta_j}$. Since $p$ is odd, $(wp-d)\equiv(p-1+w-d)\pmod{2}$,
so $\epsilon_{\lambda_{p^*}}\cdot \epsilon_p =\epsilon_{\lambda}$. This completes the proof.
\end{enumerate}
\end{proof}
%%%%%%%%%%%%%}
%%%%%%%%%%%%%
\noindent {\bf Acknowledgements.} The author is indebted
to Paul Fong for his guidance and suggestions. This research was partially supported from a grant by
PSC-CUNY.

\end{document}